%
%

\documentclass[leqno,12pt,a4paper]{article}
\usepackage[dvips]{graphicx}
\usepackage{amssymb}
\usepackage{amsmath}
\usepackage{mathrsfs}
\usepackage{color}
\usepackage{comment}
\usepackage{theorem}
\usepackage{ascmac}
\newcommand{\pref}[1]{(\ref{#1})}
\newcommand{\be}{\begin{equation}}
\newcommand{\ee}{\end{equation}}

\newcommand{\qed}{{\unskip\nobreak\hfil\penalty50\quad\null\nobreak\hfil
	$\square$\parfillskip0pt\finalhyphendemerits0\par\medskip}}

\newcommand{\vecb}{\mbox{\boldmath $ b $}}
\newcommand{\vecc}{\mbox{\boldmath $ c $}}

\newcommand{\vecf}{\mbox{\boldmath $ f $}}

\newcommand{\vecx}{\mbox{\boldmath $ x $}}

\newcommand{\veckappa}{\mbox{\boldmath $ \kappa $}}
\newcommand{\vecnu}{\mbox{\boldmath $ \nu $}}

\newcommand{\vectau}{\mbox{\boldmath $ \tau $}}

\newtheorem{thm}{Theorem}[section]

\theorembodyfont{\rmfamily}

\newtheorem{proof}{\normalfont\itshape Proof.}

\title{Large-time behavior of the $H^{-m}$-gradient flow of length for closed plane curves
\footnote
{2010 Mathematics Subject Classification:
53A04,
53C44,
35B40,
35K55}}
\author{Kohei Nakamura
\footnote{{\it E-mail address}:\ k.nakamura.498@ms.saitama-u.ac.jp\ (K. Nakamura)}\\
Saitama University, Japan}
\date{\today}
\begin{document}
\maketitle
\begin{abstract}
We consider the $H^{-m}$-gradient flow of length for closed plane curves.
This flow is a generalization of curve diffusion flow.
We investigate the large-time behavior assuming the global existence of the flow.
Then we show that the evolving curve converges exponentially to a circle.
To do this, we use interpolation inequalities between the deviation of curvature and the 
isoperimetric ratio, recently established by Nagasawa and the author.
\\
\\[0.3cm]
{\it Keywords}: area-preserving flow, isoperimetric ratio, interpolation inequalities, 
higher order curvature flow, curve diffusion flow
\end{abstract}
\section{Introduction}
\par
Let $ \vecf = ( f_1 , f_2 ) \, : \, \mathbb{R} / L \mathbb{Z} \to \mathbb{R}^2 $ be a function such that $ \mathrm{Im} \vecf $ is a closed plane curve with rotation number $ 1 $ and the variable of $ \vecf $ is the arc-length parameter.
The unit tangent vector is $ \vectau = ( f_1^\prime , f_2^\prime ) $.
Let $ \vecnu = ( - f_2^\prime , f_1^\prime ) $ be the inward unit normal vector,
and let $ \veckappa = \vecf^{ \prime \prime } $ be the curvature vector.
The curvature $ \kappa = \veckappa \cdot \vecnu $ is positive when $ \mathrm{Im} \vecf $ is convex.
Since the curve has rotation number $ 1 $, the deviation of curvature is
\[
	\tilde \kappa
	=
	\kappa - \frac 1L \int_0^L \kappa \, ds
	=
	\kappa - \frac { 2 \pi } L .
\]
\par
In this paper, we consider the flow
\begin{align}
\partial_t \vecf = (-1)^m (\partial_s^{2m}\tilde{\kappa})\vecnu.
\label{mth-curvediffusion}
\end{align}
This is the $H^{-m}$-gradient flow of length. Indeed, for any 
$ \varphi \in H^{-m} $ , we have
\begin{align*}
	\left. \frac d { d \epsilon } L ( \vecf + \epsilon \varphi \vecnu ) \right|_{ \epsilon = 0 }
	= & \
	- \int_0^L \tilde \kappa \varphi \, ds
	=
	- \int_0^L \left\{ ( -1 )^m \partial_s^m \tilde \kappa \right\} \left( \partial_s^{-m} \varphi \right) ds
	\\
	= & \
	- \int_0^L \left[ \partial_s^{-m} \left\{ ( -1 )^m \partial_s^{2m} \tilde \kappa \right\} \right] \left( \partial_s^{-m} \varphi \right) ds
	\\
	= & \
	- \langle
	( -1 )^m \partial_s^{2m} \tilde \kappa
	,
	\varphi
	\rangle_{ H^{-m} }.
\end{align*}
This flow has already been considered when $m=0,1$.
\par
When $m=0$, the flow {\rm \pref{mth-curvediffusion}} is 
\begin{align}
\partial_t \vecf =  \tilde{\kappa}\vecnu.
\end{align}
This flow was first considered by Gage \cite{G}, who proved that a simple closed strictly convex initial curve remains so along the flow, and
the evolving curve converges to a circle. However, 
we can not expect that a global solution exists if initial curve is not convex.
Using numerical analysis, Mayer \cite{M} found an example of a curve which develops singularities in finite time 
under the flow. Very little appears to be known regarding sufficient conditions for global existence.
\par
Nagasawa and the author \cite{NN} considered large-time behavior assuming the global existence of the flow. 
We showed that the evolving curve converges exponentially to a circle 
assuming the global existence by using interpolation inequalities in section 2.
\par
When $m=1$, the flow {\rm \pref{mth-curvediffusion}} is 
\begin{align}
\partial_t \vecf =  -(\partial_s^2\tilde{\kappa})\vecnu.
\label{curvediffusion}
\end{align}
This flow was proposed by Mullins \cite{Mu} and we call it \textit{curve diffusion flow}.
The flow is a fourth-order parabolic partial differential equation. Hence we do not expect 
convexity to be preserved along the flow. Indeed, Giga and Ito \cite{GI} showed the existence  
of a simple closed strictly convex plane curve that becomes
non-convex in finite time under the flow. Also, Escher--Ito \cite{EI} and Chou \cite{C} proved that evolving curves may develop singularities
in finite time even when the initial curve is smooth.
\par
On the other hand, there are some results for large-time behavior. Chou \cite{C} showed that the evolving curve converges exponentially to a circle 
assuming the global existence of the flow. Moreover Elliott--Garcke \cite{EG} and Wheeler \cite{W} 
showed the global existence and investigated the large-time behavior for initial data close to a circle.
\par
In this paper, we would like to investigate the large-time behavior of {\rm \pref{mth-curvediffusion}}
assuming the global existence of the flow. In order to do this, we introduce several inequalities in section 2.
In section 3, by using these inequalities, we prove that the evolving curve converges exponentially to a circle.   
\section{Several inequalities}
\setcounter{equation}{0}
\par
In this section we introduce several inequalities which were established in \cite{NN}; for details of the proofs, see \cite{NN}.
\par
For a non-negative integer $ \ell $,
we set
\[
	I_\ell = L^{ 2 \ell + 1 } \int_0^L | \tilde \kappa^{( \ell )} |^2 ds ,
\]
which is a scale invariant quantity (cf.\
\cite{DKS}).
It is important for the global analysis of evolving curves to estimate $I_\ell$.
We have the Gagliardo-Nirenberg inequalities 
\[
  I_\ell\leq CI_m^{\frac{\ell}{m}}I_0^{1-\frac{\ell}{m}},
\]
where $0\leq \ell\leq m$ and $C$ is a positive constant and independent of $L$.
Such inequalities are very useful but only these are not sufficient to estimate $I_0$ because of the way these inequalities make use of $I_0$.
Hence we need a different type of inequality to estimate $I_\ell$ for $\ell \geq 0$.
\par
We introduce the quantity
\[
	I_{-1} = 1 - \frac { 4 \pi A } { L^2 }
	,
\]
where
the $ A $ is the (signed area) given by
\[
	A = - \frac 12 \int_0^L \vecf \cdot \vecnu \, ds .
\]
$I_{-1}$ is also scale invariant,
and is non-negative by the isoperimetric inequality.  
\par
The following inequalities for $I_0, I_{-1}$ were derived by Nagasawa and the author in \cite{NN}.
\begin{thm}
We have
\[
	8\pi^2 I_{-1} \leq
	I_0  
        \leq I_{-1}^{ \frac 12 }
	\left[ L^3 \int_0^L
	\left\{ \kappa^3 \tilde \kappa + ( \tilde \kappa^\prime )^2 \right\} ds
	\right]^{ \frac 12 }
	.
\]
In both cases, equality holds only in the trivial case $ \tilde \kappa \equiv 0 $.
\label{Theorem1}
\end{thm}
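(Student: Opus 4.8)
The plan is to prove the two estimates separately. Both $I_0$ and $I_{-1}$ are scale invariant, so one may normalize $L=2\pi$ when convenient, and both arguments rest on the elementary identity
\[
	\int_0^L \tilde\kappa\,(\vecf\cdot\vecnu)\,ds = -L\,I_{-1},
\]
obtained by writing $\tilde\kappa\vecnu = \vectau' - \tfrac{2\pi}{L}\vecnu$, integrating by parts $\int_0^L \vecf\cdot\vectau'\,ds = -\int_0^L |\vectau|^2\,ds = -L$, and inserting $\int_0^L \vecf\cdot\vecnu\,ds = -2A$.

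For the lower bound $8\pi^2 I_{-1}\le I_0$ I would pass to the Fourier side of the unit tangent. Since the rotation number is $1$, write $\vectau = \exp(i\Theta)$ (identifying $\mathbb{R}^2$ with $\mathbb{C}$) with $\Theta(s) = \tfrac{2\pi}{L}s + \phi(s)$, where $\phi$ is $L$-periodic and, after a rotation of the curve, of mean zero; thus $\tilde\kappa = \phi'$. Put $w = \exp(i\phi)$, so $|w|\equiv 1$ and $\sum_m |w_m|^2 = 1$. The closure condition $\int_0^L \vectau\,ds = 0$ is exactly the vanishing of one Fourier coefficient of $w$, say $w_{-1}=0$; a Hurwitz-type computation of the area (antidifferentiating $\vectau$, which is legitimate by closure) yields $\tfrac{4\pi A}{L^2} = \sum_{m\ne -1}\tfrac{|w_m|^2}{m+1}$, hence $I_{-1} = \sum_{m\ne -1}\tfrac{m}{m+1}|w_m|^2$, while $I_0 = L\int_0^L |w'|^2\,ds = 4\pi^2\sum_m m^2 |w_m|^2$ (using $|w'|=|\phi'|$ and Parseval). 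Consequently the inequality is equivalent to the termwise bound $\sum_m c_m |w_m|^2 \ge 0$ with $c_m = m^2 - \tfrac{2m}{m+1} = \tfrac{m(m-1)(m+2)}{m+1}$, and one checks directly that $c_m \ge 0$ for every $m\ne -1$, with $c_m = 0$ only for $m\in\{-2,0,1\}$. Combined with $|w|\equiv 1$, $w_{-1}=0$, and the periodicity of $\phi$, equality then forces $w$ to be constant, i.e. $\tilde\kappa\equiv 0$.

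For the upper bound I would recast the claim as $I_0^2 \le (L^2 I_{-1})\bigl(L\int_0^L (\kappa^3\tilde\kappa + (\tilde\kappa')^2)\,ds\bigr)$, replace $L^2 I_{-1}$ by $-L\int_0^L \tilde\kappa\,(\vecf\cdot\vecnu)\,ds$ via the identity above, and estimate $\int_0^L \tilde\kappa^2\,ds$ by the Cauchy--Schwarz inequality. The idea is to integrate by parts in $\int_0^L \tilde\kappa\,(\vecf\cdot\vecnu)\,ds$, using $(\vecf\cdot\vecnu)' = -\kappa(\vecf\cdot\vectau)$, $(\vecf\cdot\vectau)' = 1+\kappa(\vecf\cdot\vecnu)$, and $\vectau'=\kappa\vecnu$, so that the two factors of the Cauchy--Schwarz product become, on one side, a weighted $L^2$-norm of $\tilde\kappa$ built from the position vector, and, on the other side, precisely the quantity $\int_0^L (\kappa^3\tilde\kappa + (\tilde\kappa')^2)\,ds$. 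Here the auxiliary facts I would lean on are the rewriting $\int_0^L (\kappa^3\tilde\kappa + (\tilde\kappa')^2)\,ds = \int_0^L |\partial_s(\tilde\kappa\vecnu)|^2\,ds + \tfrac{2\pi}{L}\int_0^L \kappa^2\tilde\kappa\,ds$ and the sign $\int_0^L \kappa^3\tilde\kappa\,ds \ge 0$ (Chebyshev's inequality, since $t\mapsto t^3$ is increasing). Equality in Cauchy--Schwarz again pins down $\tilde\kappa\equiv 0$.

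I expect the Frenet computations, the Hurwitz-type formula, and the sign check $c_m\ge0$ to be routine. The genuine obstacle is the upper bound: one must find the precise chain of integrations by parts that makes the Cauchy--Schwarz cross term exactly equal to $\bigl(\int_0^L \tilde\kappa^2\,ds\bigr)^2$ with no leftover terms --- it is this constraint that singles out the combination $\kappa^3\tilde\kappa + (\tilde\kappa')^2$ --- and to keep careful track of all boundary and mean-zero contributions so that nothing spurious survives.
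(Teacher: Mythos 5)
Your lower bound is fine, and in fact it is a clean self-contained argument: writing $\vectau=e^{i\Theta}$, $\Theta=\tfrac{2\pi}{L}s+\phi$, $w=e^{i\phi}$, the identities $\sum_m|w_m|^2=1$, $w_{-1}=0$, $\tfrac{4\pi A}{L^2}=\sum_{m\neq-1}\tfrac{|w_m|^2}{m+1}$, $I_{-1}=\sum_{m\neq-1}\tfrac{m}{m+1}|w_m|^2$ and $I_0=4\pi^2\sum_m m^2|w_m|^2$ all check out, the coefficient $\tfrac{m(m-1)(m+2)}{m+1}$ is indeed nonnegative for every integer $m\neq-1$ and vanishes exactly for $m\in\{-2,0,1\}$, and the equality discussion (a trigonometric polynomial $w_{-2}e^{-2i\theta}+w_0+w_1e^{i\theta}$ of constant modulus has a single nonzero coefficient, and periodicity of $\phi$ then forces $w$ constant) is routine. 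Your preliminary identity $\int_0^L\tilde\kappa\,(\vecf\cdot\vecnu)\,ds=-LI_{-1}$, the rewriting of $\int_0^L\{\kappa^3\tilde\kappa+(\tilde\kappa')^2\}\,ds$, and the sign $\int_0^L\kappa^3\tilde\kappa\,ds\geq0$ are also correct. (The paper itself does not reproduce any proof of this theorem; it defers entirely to the reference [NN].)

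The genuine gap is the second inequality, which is the half that actually drives Theorem \ref{Theorem2} and the decay argument. What you offer there is a plan, not a proof, and you say so yourself: the ``precise chain of integrations by parts'' that is supposed to turn Cauchy--Schwarz into the exact factorization is never exhibited. Moreover, the mechanism as you describe it cannot be just bookkeeping: to get
\[
\Bigl(\int_0^L\tilde\kappa^2\,ds\Bigr)^2\leq\Bigl(-\int_0^L\tilde\kappa\,(\vecf\cdot\vecnu)\,ds\Bigr)\int_0^L\{\kappa^3\tilde\kappa+(\tilde\kappa')^2\}\,ds
\]
by a direct Cauchy--Schwarz you would need to write $\tilde\kappa^2=UV$ with $\int U^2$ and $\int V^2$ dominated by the two factors, but neither factor has a pointwise nonnegative integrand --- $\vecf\cdot\vecnu$ changes sign for a general curve, and $\kappa^3\tilde\kappa$ can be negative; both quantities are nonnegative only after integration. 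So the splitting cannot be done with a pointwise weight, and the straightforward integrations by parts one can try (e.g.\ using $1=\partial_s(\vecf\cdot\vectau)-\kappa(\vecf\cdot\vecnu)$ or $\kappa=-\partial_s^2(\vecf\cdot\vecnu)-\kappa'(\vecf\cdot\vectau)-\kappa^2(\vecf\cdot\vecnu)$) either collapse to a tautology or produce a different, non-matching right-hand side. The inequality requires the specific integral identity/argument of Nagasawa--Nakamura, which your sketch does not supply; the equality statement for this half is likewise unproven. As it stands, you have proved the first inequality but not the second.
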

\par
From this inequality, we have the following new interpolation inequalities.
\begin{thm}
Let $ 0 \leq \ell \leq m $.
There exists a positive constant $ C = C ( \ell,m ) $ independent of $ L $ such that
\[
	I_\ell
	\leq
	C \left( I_{-1}^{ \frac { m - \ell } 2 } I_m + I_{-1}^{ \frac { m - \ell } { m+1 } } I_m^{ \frac { \ell +1 } { m+1 } } \right)
\]
holds.
\label{Theorem2}
\end{thm}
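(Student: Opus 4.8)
The plan is to prove the endpoint case $\ell=0$ first and then obtain the general case by interpolating it against the Gagliardo--Nirenberg inequalities quoted above. (For $m=0$ the statement is vacuous, so assume $m\geq 1$; the trivial case $\tilde\kappa\equiv 0$ is also set aside, all quantities being zero.)

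For $\ell=0$ I would start from the right-hand inequality of Theorem~\ref{Theorem1}, written as $I_0^2\leq I_{-1}\,Q$ with
\[
	Q=L^3\int_0^L\{\kappa^3\tilde\kappa+(\tilde\kappa')^2\}\,ds ,
\]
and reduce $Q$ to a function of $I_1$ alone. Expanding $\kappa=\tilde\kappa+2\pi/L$ and using $\int_0^L\tilde\kappa\,ds=0$ to annihilate the linear term gives
\[
	Q=L^3\int_0^L\tilde\kappa^4\,ds+6\pi L^2\int_0^L\tilde\kappa^3\,ds+12\pi^2 I_0+I_1 .
\]
Since $\tilde\kappa$ has mean zero it vanishes somewhere, so $\|\tilde\kappa\|_{L^\infty}^2\leq L\|\tilde\kappa'\|_{L^2}^2=I_1/L^2$; together with $L\int_0^L\tilde\kappa^2\,ds=I_0$ this bounds the quartic term by $I_0 I_1$ and the cubic term by $6\pi\,I_0 I_1^{1/2}$. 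The Poincaré--Wirtinger inequality for the mean-zero function $\tilde\kappa$, namely $I_0\leq(2\pi)^{-2}I_1$, then disposes of the term $12\pi^2 I_0$ and upgrades the previous two bounds to $(2\pi)^{-2}I_1^2$ and a multiple of $I_1^{3/2}\leq\tfrac12(I_1^2+I_1)$. Hence $Q\leq C(I_1^2+I_1)$, and feeding this into $I_0^2\leq I_{-1}Q$ and taking square roots yields
\[
	I_0\leq C\left(I_{-1}^{1/2}I_1+I_{-1}^{1/2}I_1^{1/2}\right),
\]
which is already Theorem~\ref{Theorem2} for $m=1$, $\ell=0$.

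To reach general $m$, substitute $I_1\leq C I_m^{1/m}I_0^{1-1/m}$ into the last display, producing $I_0\leq C\bigl(I_{-1}^{1/2}I_m^{1/m}I_0^{1-1/m}+I_{-1}^{1/2}I_m^{1/(2m)}I_0^{(1-1/m)/2}\bigr)$. Whichever of the two summands is the larger, dividing by the relevant power of $I_0$ and raising to the appropriate power gives either $I_0\leq C I_{-1}^{m/2}I_m$ or $I_0\leq C I_{-1}^{m/(m+1)}I_m^{1/(m+1)}$, so in all cases
\[
	I_0\leq C\left(I_{-1}^{m/2}I_m+I_{-1}^{m/(m+1)}I_m^{1/(m+1)}\right).
\]
Finally, for $1\leq\ell\leq m$ I apply $I_\ell\leq C I_m^{\ell/m}I_0^{1-\ell/m}$, insert the bound just obtained for $I_0$, and use $(a+b)^\theta\leq a^\theta+b^\theta$ with $\theta=1-\ell/m\in[0,1]$: the powers of $I_m$ collapse to $1$ in one term and to $(\ell+1)/(m+1)$ in the other, while the powers of $I_{-1}$ become $(m-\ell)/2$ and $(m-\ell)/(m+1)$, which is exactly the claimed inequality.

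The only genuinely delicate point is the reduction $Q\leq C(I_1^2+I_1)$ in the second paragraph: the term $12\pi^2 I_0$ arising from $\kappa=\tilde\kappa+2\pi/L$ cannot be absorbed into the left-hand side of $I_0^2\leq I_{-1}Q$ directly, and it is the Poincaré--Wirtinger bound $I_0\lesssim I_1$ that makes it (and the cross terms) harmless. After that, everything is routine bookkeeping with the Gagliardo--Nirenberg and Young inequalities; the minor subtlety that $I_0$ occurs on both sides in the third paragraph is handled by the elementary observation that a scalar inequality $x\leq A x^a+B x^b$ with $a,b<1$ forces $x\leq C(A^{1/(1-a)}+B^{1/(1-b)})$.
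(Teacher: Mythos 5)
Your proposal is correct: the reduction $Q\leq C(I_1^2+I_1)$ via $\|\tilde\kappa\|_{L^\infty}^2\leq L\|\tilde\kappa'\|_{L^2}^2$ and Wirtinger, the absorption lemma $x\leq Ax^a+Bx^b$ with $a,b<1$, and the exponent bookkeeping $\ell/m+\frac{1}{m+1}\cdot\frac{m-\ell}{m}=\frac{\ell+1}{m+1}$ all check out, and the degenerate cases ($\tilde\kappa\equiv 0$, $m=0$, $\ell=m$) are harmless as you note. The paper itself does not reproduce a proof (it defers to the cited work of Nagasawa and the author), but it states that Theorem \ref{Theorem2} follows from Theorem \ref{Theorem1} together with the Gagliardo--Nirenberg inequalities, which is exactly the route you take; so your argument is essentially the intended one, with the added merit of being self-contained.
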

\section{Large-time bahavior}
\setcounter{equation}{0}
\par
In this section, we investigate the large-time behavior of {\rm \pref{mth-curvediffusion}}
assuming the 
global existence of the flow.
Since {\rm \pref{mth-curvediffusion}} is a parabolic equation,
$ \vecf $ is smooth for $ t > 0 $ as long as the solution exists.
Hence by shifting the initial time,
we may assume the initial data is smooth. Then we have the following theorem. 
\begin{thm}
Assume that $ \vecf $ is a global solution of {\rm \pref{mth-curvediffusion}} 
such that the initial rotation number is $ 1 $ and the initial {\rm (}signed{\rm )} area is positive.
Then for each $ \ell \in \mathbb{N} \cup \{ -1, 0 \} $,
there exist $ C_\ell > 0 $ and $ \lambda_\ell > 0 $ such that
\[
	I_\ell (t) \leq C_\ell e^{ - \lambda_\ell t } .
\]
\label{Theorem3}
\end{thm}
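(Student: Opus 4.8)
The plan is to use the length as a Lyapunov functional together with the interpolation inequalities of Section~2, organising everything around the isoperimetric deficit $I_{-1}$.

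\textbf{Step 1 (geometric monotonicity).} Along \pref{mth-curvediffusion} the signed area is preserved, $\frac{d}{dt}A=-\int_0^L(-1)^m\partial_s^{2m}\tilde\kappa\,ds=0$, so $A\equiv A_0>0$; and the length dissipates, $\frac{d}{dt}L=-\int_0^L\kappa\,(-1)^m\partial_s^{2m}\tilde\kappa\,ds=-\int_0^L(\partial_s^m\tilde\kappa)^2\,ds=-L^{-(2m+1)}I_m\le0$. By the isoperimetric inequality $L(t)^2\ge4\pi A_0$, hence $\sqrt{4\pi A_0}\le L(t)\le L(0)$ and $L(t)$ decreases to some $L_\infty$; consequently $\int_0^\infty L^{-(2m+1)}I_m\,dt=L(0)-L_\infty<\infty$, so $\int_0^\infty I_m\,dt<\infty$, and $I_{-1}(t)=1-4\pi A_0L(t)^{-2}$ is non-increasing.

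\textbf{Step 2 (exponential decay of $I_{-1}$).} Combining Theorem~\ref{Theorem1} ($8\pi^2I_{-1}\le I_0$) with Theorem~\ref{Theorem2} at $\ell=0$, dividing $8\pi^2I_{-1}\le C(I_{-1}^{m/2}I_m+I_{-1}^{m/(m+1)}I_m^{1/(m+1)})$ by $I_{-1}$ and using $0\le I_{-1}\le1$, one sees that $I_m/I_{-1}$ cannot be arbitrarily small; thus $I_m\ge c\,I_{-1}$ for some $c=c(m)>0$. Then $\frac{d}{dt}I_{-1}=8\pi A_0L^{-3}\frac{d}{dt}L=-8\pi A_0L^{-(2m+4)}I_m\le-8\pi A_0L(0)^{-(2m+4)}c\,I_{-1}$, so $I_{-1}(t)\le I_{-1}(0)e^{-\lambda_{-1}t}$; in particular $L(t)\to L_\infty=\sqrt{4\pi A_0}$ exponentially fast.

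\textbf{Step 3 (exponential decay of $I_\ell$, $\ell\ge0$).} I would compute the evolution identity $\frac{d}{dt}I_\ell=L^{-2m-2}[-2I_{m+\ell+1}+8\pi^2I_{m+\ell}+R_\ell]$, where $R_\ell$ gathers the remaining, at-least-cubic terms produced by $\kappa^2(-1)^m\partial_s^{2m}\tilde\kappa$, by $\partial_t(ds)=-\kappa(-1)^m\partial_s^{2m}\tilde\kappa\,ds$ and by the commutators of $\partial_t$ with $\partial_s$ (this bracket also contains the favourable term $-(2\ell+1)I_mI_\ell$). By DKS-type integrations by parts followed by the Gagliardo--Nirenberg inequalities (cf.\ \cite{DKS}), $R_\ell\le\frac14I_{m+\ell+1}+C\sum_jI_0^{p_j}$ with all exponents $p_j>1$. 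The delicate term is $8\pi^2I_{m+\ell}$: the sharp Wirtinger inequality yields only $8\pi^2I_{m+\ell}\le2I_{m+\ell+1}$, exactly cancelling the dissipation, so I would instead invoke Theorem~\ref{Theorem2} one level up, $8\pi^2I_{m+\ell}\le C(I_{-1}^{1/2}I_{m+\ell+1}+I_{-1}^{1/(m+\ell+2)}I_{m+\ell+1}^{(m+\ell+1)/(m+\ell+2)})$, which by Step~2 and Young's inequality is $\le\frac14I_{m+\ell+1}+Ce^{-\lambda_{-1}t}$ for large $t$. Hence, for large $t$, $\frac{d}{dt}I_\ell\le L^{-2m-2}[-\frac32I_{m+\ell+1}+Ce^{-\lambda_{-1}t}+C\sum_jI_0^{p_j}]$. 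For $\ell=0$: iterating Wirtinger gives $I_0\le(4\pi^2)^{-(m+1)}I_{m+1}$ and $\int_0^\infty I_0\,dt<\infty$, so $I_0$ falls below any chosen small threshold $\delta$ at some large time; for $t$ beyond that, while $I_0\le\delta$ the sum $C\sum_jI_0^{p_j}$ is absorbed, giving $\frac{d}{dt}I_0\le-\mu_0I_0+C'e^{-\lambda_{-1}t}$, which both keeps $I_0\le\delta$ thereafter and, by Gronwall, yields $I_0(t)\le C_0e^{-\lambda_0t}$. For $\ell\ge1$: using $I_\ell\le(4\pi^2)^{-(m+1)}I_{m+\ell+1}$ and the decay of $I_0$ just obtained, the inequality becomes $\frac{d}{dt}I_\ell\le-\mu_\ell I_\ell+C_\ell'e^{-\nu_\ell t}$, whence $I_\ell(t)\le C_\ell e^{-\lambda_\ell t}$ by Gronwall. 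With Step~2 this gives the theorem for every $\ell\ge-1$.

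\textbf{Where the difficulty lies.} The main obstacle is precisely the borderline structure in Step~3: with the sharp Wirtinger constant the positive quadratic term $8\pi^2I_{m+\ell}$ annihilates the dissipation $-2I_{m+\ell+1}$, so at the linearised level the decay is invisible. Theorem~\ref{Theorem2} saves the day by turning that term into an exponentially small perturbation $I_{-1}^{\,\mathrm{power}}I_{m+\ell+1}+\cdots$ — but this is available only after the exponential decay of $I_{-1}$ has been established, which is why the whole bootstrap must be driven by $I_{-1}$ rather than by $I_0$. The remaining work is routine but must be done carefully: arranging the integrations by parts so that $R_\ell$ is controlled purely by the endpoint quantities $I_0$ and $I_{m+\ell+1}$ with all $I_0$-exponents strictly above~$1$, and treating the compact initial interval where only the continuity of $t\mapsto I_\ell(t)$ is used.
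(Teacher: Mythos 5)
Your proposal is correct and follows essentially the same route as the paper: the same monotonicity facts ($A$ constant, $dL/dt=-L^{-(2m+1)}I_m$), exponential decay of $I_{-1}$ from $I_m\gtrsim I_{-1}$, the evolution identity for $I_\ell$ with Gagliardo--Nirenberg/Young control of the nonlinear remainder, the borderline term $\sim I_{m+\ell}$ neutralized by Theorem \ref{Theorem2} one level up together with the decay of $I_{-1}$, and absorption of the superlinear $I_0$-powers using $\int_0^\infty I_0\,dt<\infty$ followed by Gronwall. Your diagnosis of the critical cancellation and its resolution via Theorem \ref{Theorem2} is exactly the mechanism the paper uses.
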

\begin{proof}
We have
\begin{align*}
	\frac { dL } { dt }
	&=
	- \int_0^L \partial_t \vecf \cdot \veckappa \, ds
	=
	(-1)^{m+1} \int_0^L \left( \partial_s^{ 2m } \tilde \kappa \right) \tilde \kappa \,  ds
	=
	- \int_0^L \left( \partial_s^m \tilde \kappa \right)^2 ds
        =
        - \frac{1}{L^{2m+1}}I_m,\\
\frac { dA } { dt }
	&=
	- \int_0^L \partial_t \vecf \cdot \vecnu \, ds
	=
	(-1)^{m+1} \int_0^L \left( \partial_s^{ 2m } \tilde \kappa \right) ds
	=
	0.
\end{align*}
When $\ell=-1$, we have
\[
\frac d { dt } I_{-1}
	=
	\frac d { dt } \left( - \frac { 4 \pi A } { L^2 } \right)
	=
	\frac { 8 \pi A } { L^3 } \frac { dL } { dt }
	=
	- \frac { 8 \pi A } { L^{ 2 ( m + 2 ) } } I_m
	\leq
	- \lambda_{-1} I_{-1},
\]
where $\lambda_{-1}$ is a positive constant. Hence, the exponential decay of $I_{-1}$ follows.
\par
Next we consider the behavior of $I_0$. Since
\[
	\partial_t \kappa
	=
	( -1 )^m \partial_s^{ 2m + 2 } \tilde \kappa
	+
	( -1 )^m \kappa^2 \partial_s^{ 2m } \tilde \kappa,
\]
we have
\begin{align*}
	\frac d { dt } I_0
	= & \
	\frac { dL } { dt } \int_0^L \tilde \kappa^2 ds
	+
	L \int_0^L 2 \tilde \kappa \partial_t \tilde \kappa ds
	+
	L \int_0^L \tilde \kappa^2 \partial_t ( ds )
	\\
	= & \
	- \frac { I_m } { L^{ 2m+1 } } \frac { I_0 } L
	+
	2L \int_0^L \tilde \kappa
	\left\{
	( -1 )^m \partial_s^{ 2m + 2 } \tilde \kappa
	+
	( -1 )^m \kappa^2 \partial_s^{ 2m } \tilde \kappa
        +\frac{2\pi}{L^2}\frac{dL}{dt}
	\right\}
	ds\\
	& +
	( -1 )^{ m+1 } L\int_0^L
	\tilde \kappa^2 \kappa \partial_s^{ 2m } \tilde \kappa ds
	\\
	= & \
	- \frac { I_0 I_m } { L^{ 2 ( m + 1 ) } }
	-
	2L \int_0^L \left( \partial_s^{ m+1 } \tilde \kappa \right)^2 ds
	+
	( -1 )^m L \int_0^L
	\tilde \kappa \kappa \left( 2\kappa - \tilde \kappa \right)
	\partial_s^{2m } \tilde \kappa \, ds\\
        = & \
        - \frac { I_0 I_m } { L^{ 2 ( m + 1 ) } }
	-
	\frac { 1 } { L^{ 2 ( m + 1 ) } } I_{m+1}
	+
	( -1 )^m L \int_0^L
	\tilde \kappa \left(\tilde{\kappa}+\frac{2\pi}{L}\right) 
        \left\{ 2\left(\tilde{\kappa}+\frac{2\pi}{L}\right) - \tilde \kappa \right\}
	\partial_s^{2m } \tilde \kappa \, ds.
\end{align*}
Hence we find
\begin{align}
	\frac d { dt } I_0
	+
	\frac { I_0 I_m } { L^{ 2 ( m + 1 ) } }
	+
	\frac { 2 I_{ m+1 } } { L^{ 2 ( m + 1 ) } }
	=
	(-1)^m L \int_0^L
	\left(
	\tilde \kappa^3 + \frac { 6 \pi } L \tilde \kappa^2 + \frac { 8 \pi^2 } { L^2 } \tilde \kappa
	\right)
	\partial_s^{2m } \tilde \kappa \, ds
        \label{3.1}
        .
\end{align}
For $ k \in \mathbb{N} \cup \{ 0 \} $ and $ m \in \mathbb{N} $,
let $ P_m^k ( \tilde \kappa ) $ be any linear combination of the type
\[
	P_m^k ( \tilde \kappa )
	=
	\sum_{ i_1 + \cdots + i_m = k } c_{ i_1 , \dots , i_m }
	\partial_s^{ i_1 } \tilde \kappa \cdots \partial_s^{ i_m } \tilde \kappa
\]
with universal,
constant coefficients $ c_{ i_1 , \dots , i_m } $.
Similarly we define $ P_0^k $ as a universal constant.
Then terms on the right-hand side of \pref{3.1} are
\begin{gather*}
	(-1)^m L \int_0^L
	\tilde \kappa^3
	\partial_s^{2m} \tilde \kappa \, ds
	=
	L \int_0^L
	P_3^m ( \tilde \kappa ) \partial_s^m \tilde \kappa \, ds
	,
	\\
	(-1)^m 6 \pi \int_0^L
	\tilde \kappa^2
	\partial_s^{2m} \tilde \kappa \, ds
	=
	6 \pi \int_0^L
	P_2^m ( \tilde \kappa ) \partial_s^m \tilde \kappa \, ds
	,
	\\
	(-1)^m \frac { 8 \pi^2 } L \int_0^L
	\tilde \kappa
	\partial_s^{2m} \tilde \kappa \, ds
	=
	\frac { 8 \pi^2 } L \int_0^L
	\left( \partial_s^m \tilde \kappa \right)^2 ds
	=
	\frac { 8 \pi^2 } { L^{ 2 ( m + 1 ) } } I_m,
\end{gather*}
by use of integration by parts. We set
\[
	J_{k,p}
	=
	\left(
	L^{ ( 1 + k ) p - 1 }
	\int_0^L \left| \partial_s^k \tilde \kappa \right|^p ds
	\right)^{ \frac 1p }.
\]
Then we have 
\[
	J_{ n,p }
	\leq
	C J_{ m,q }^\theta J_{0,r}^{ 1 - \theta }
	,
	\quad
	\theta = \frac { n - \frac 1p + \frac 1r } { m - \frac 1q + \frac 1r }
\]
from the Gagliardo-Nirenberg inequalities. Hence we show, by using Young's inequality,
\begin{align*}
	&
	\left|
	L \int_0^L
	P_3^m ( \tilde \kappa ) \partial_s^m \tilde \kappa \, ds
	\right|
	\\
	& \quad
	\leq
	\frac C { L^{ 2( m+1 ) } }
	\sum_{ \ell = 0 }^m \sum_{ k+j = \ell }
	J_{m,4} J_{k,4} J_{j,4} J_{m-k-j,4}
	\\
	& \quad
	\leq
	\frac C { L^{ 2( m+1 ) } }
	\sum_{ \ell = 0 }^m \sum_{ k+j = \ell }
	J_{ m+1 , 2 }^{ \frac { 4m + 1 } { 4 ( m+1 ) } }
	J_{0,2}^{ \frac 3 { 4( m+1 ) } }
	J_{ m+1 , 2 }^{ \frac { 4k + 1 } { 4 ( m+1 ) } }
	J_{0,2}^{ \frac { 4 ( m - k ) + 3 } { 4( m+1 ) } } 
	J_{ m+1 , 2 }^{ \frac { 4j + 1 } { 4 ( m+1 ) } }
	J_{0,2}^{ \frac { 4 ( m - j ) + 3 } { 4( m+1 ) } } 
	J_{ m+1 , 2 }^{ \frac { 4 ( m - k - j ) + 1 } { 4 ( m+1 ) } }
	J_{0,2}^{ \frac { 4 ( k + j ) + 3 } { 4( m+1 ) } } 
	\\
	& \quad
	\leq
	\frac C { L^{ 2( m+1 ) } }
	J_{ m+1 , 2 }^{ \frac { 2m + 1 } { m+1 } }
	J_{0,2}^{ \frac { 2m + 3 } { m+1 } }
	\\
	& \quad
	=
	\frac C { L^{ 2( m+1 ) } }
	I_{ m+1 }^{ \frac { 2m + 1 } { 2 ( m+1 ) } }
	I_0^{ \frac { 2m + 3 } { 2 ( m+1 ) } }
	\\
	& \quad
	\leq
	\frac C { L^{ 2( m+1 ) } }
	\left(
	\epsilon I_{ m+1 } + C_\epsilon I_0^{ 2m+3 }
	\right)
\end{align*}
for any $\epsilon>0$ and appropriate constant $C_\epsilon$. Similarly we also have
\begin{align*}
	&
	\left|
	6 \pi \int_0^L
	P_2^m ( \tilde \kappa ) \partial_s^m \tilde \kappa \, ds
	\right|
	\\
	& \quad
	\leq
	\frac C { L^{ 2( m+1 ) } }
	\sum_{k=0}^m
	J_{m,3} J_{k,3} J_{m-k,3}
	\\
	& \quad
	\leq
	\frac C { L^{ 2( m+1 ) } }
	\sum_{k=0}^m
	J_{ m+1 , 2 }^{ \frac { 6m + 1 } { 6 ( m+1 ) } }
	J_{ 0,2 }^{ \frac 5 { 6 ( m+1 ) } }
	J_{ m+1 , 2 }^{ \frac { 6k + 1 } { 6 ( m+1 ) } }
	J_{ 0,2 }^{ \frac { 6 ( m - k ) + 5 } { 6 ( m+1 ) } }
	J_{ m+1 , 2 }^{ \frac { 6 ( m - k ) + 1 } { 6 ( m+1 ) } }
	J_{ 0,2 }^{ \frac { 6k + 5 } { 6 ( m+1 ) } }
	\\
	& \quad
	\leq
	\frac C { L^{ 2( m+1 ) } }
	J_{ m+1 , 2 }^{ \frac { 4m + 1 } { 2 ( m+1 ) } }
	J_{ 0,2 }^{ \frac { 2m + 5 } { 2 ( m+1 ) } }
	\\
	& \quad
	=
	\frac C { L^{ 2( m+1 ) } }
	I_{ m+1 }^{ \frac { 4m + 1 } { 4 ( m+1 ) } }
	I_0^{ \frac { 2m + 5 } { 4 ( m+1 ) } }
	\\
	& \quad
	\leq
	\frac C { L^{ 2( m+1 ) } }
	\left(
	\epsilon I_{ m+1 } + C_\epsilon I_0^{ \frac { 2m + 5 } 3 }
	\right).
\end{align*}
Therefore we have
\[
	\frac d { dt } I_0
	+
	\frac { I_0 I_m } { L^{ 2 ( m+1 ) } }
	+
	\frac { 2 I_{ m+1 } } { L^{ 2 ( m+1 ) } }
	\leq
	\frac C { L^{ 2( m+1 ) } }
	\left(
	\epsilon I_{ m+1 } + C_\epsilon I_0^{ 2m + 3 }
	+ C_\epsilon I_0^{ \frac { 2m + 5 } 3 }
	+
	I_m
	\right).
\]
By using Young's inequality and Theorem \ref{Theorem2}, we obtain
\begin{align*}
	I_0^{ \frac { 2m + 5 } 3 }
	= & \
	I_0^{ \frac { 2m + 3 } 3 } I_0^{ \frac 23 }
	\leq
	\epsilon I_0 + C_\epsilon I_0
	\leq
	\epsilon I_0
	+
	C_\epsilon
	\left(
	I_{-1}^{ \frac { m+1 } 2 } I_{ m+1 }
	+
	I_{-1}^{ \frac { m+1 } { m+2 } } I_{ m+1 }^{ \frac 1 { m+2 } }
	\right)
	\\
	\leq & \
	\epsilon I_0
	+
	C_\epsilon
	\left(
	I_{-1}^{ \frac { m+1 } 2 } + \epsilon^\prime
	\right)
	I_{ m+1 }
	+
	C_{ \epsilon , \epsilon^\prime } I_{-1}
\end{align*}
where $\epsilon , \epsilon^\prime>0$ and $C_\epsilon$ and $C_{ \epsilon , \epsilon^\prime }$ are appropriate
constants. Similarly, for $m \geq 1$, we have
\begin{align*}
	I_m
	\leq 
	C
	\left(
	I_{-1}^{ \frac 12 } I_{ m+1 }
	+
	I_{-1}^{ \frac 1 { m+2 } } I_{ m+1 }^{ \frac { m+1 } { m+2 } }
	\right)
	\leq
	C
	\left(
	I_{-1}^{ \frac 12 } + \epsilon
	\right)
	I_{ m+1 }
	+
	C_{ \epsilon } I_{-1}.
\end{align*}
Taking $ \epsilon $,
$ \epsilon^\prime $ sufficiently small,
we have
\begin{align}
	\frac d { dt } I_0
	+
	\frac { I_0 I_m } { L^{ 2 ( m+1 ) } }
	+
	\frac { C_1 I_{ m+1 } } { L^{ 2 ( m+1 ) } }
	\leq
	\frac { C_2 } { L^{ 2( m+1 ) } } I_0^{ 2m+3 }
	+
	\frac { C_3 } { L^{ 2( m+1 ) } } e^{ - \lambda_{-1} t }
        \label{3.2}
\end{align}
for sufficiently large $t$. Since
\[
	\frac { dL } { dt }
	+
	\frac { I_m } { L^{ 2m+1 } } = 0,
\]
we have
\[
	\int_0^\infty I_m dt < \infty.
\]
From Wirtinger's inequlity, we obtain
\[
	\int_0^\infty I_\ell dt < \infty
\]
for $ \ell \in \{ 0 $,$ \ldots $,$ m \} $.
From \pref{3.2} and $ \int_0^\infty I_0 dt < \infty $, we can show
\[
	\frac { I_0 I_m } { L^{ 2 ( m+1 ) } }
	>
	\frac { C_2 } { L^{ 2( m+1 ) } } I_0^{ 2m+3 }
\]
for sufficiently large $t$. Hence we have
\[
	I_0 \leq C_0 e^{ - \lambda_0 t }.
\]
\par
Next we consider the behavior of $I_{\ell}$ for $\ell \geq 1$. By direct calculations, we have
\[
	\frac d { dt } I_\ell
	=
	( 2 \ell + 1 ) L^{ 2 \ell } \frac { dL } { dt }
	\int_0^L \left( \partial_s^\ell \tilde \kappa \right)^2 ds
	+
	L^{ 2\ell + 1 }
	\int_0^L \left( \partial_s^\ell \tilde \kappa \right)^2
	\partial_t ( ds )
	+
	2 L^{ 2 \ell + 1 } \int_0^L
	\left( \partial_s^\ell \tilde \kappa \right)
	\left( \partial_t \partial_s^\ell \tilde \kappa \right)
	ds
\]
and 
\begin{align*}
	( 2 \ell + 1 ) L^{ 2 \ell } \frac { dL } { dt }
	\int_0^L \left( \partial_s^\ell \tilde \kappa \right)^2 ds
	= & \
	- ( 2 \ell + 1 ) L^{ 2 \ell }
	\int_0^L \left( \partial_s^m \tilde \kappa \right)^2 ds
	\int_0^L \left( \partial_s^\ell \tilde \kappa \right)^2 ds
	\\
	= & \
	- \frac { 2 \ell + 1 } { L^{ 2( m+1 ) } }
	I_m I_\ell
	,
	\\
	L^{ 2\ell + 1 }
	\int_0^L \left( \partial_s^\ell \tilde \kappa \right)^2
	\partial_t ( ds )
	= & \
	( -1 )^{ m+1 }
	L^{ 2\ell + 1 }
	\int_0^L \left( \partial_s^\ell \tilde \kappa \right)^2
	\kappa \partial_s^{ 2m } \tilde \kappa \, ds
	\\
	= & \
	( -1 )^m
	L^{ 2\ell + 1 }
	\int_0^L \left( \partial_s^\ell \tilde \kappa \right)
	\sum_{ n=0 }^1
        L^{-(1-n)}
	P_{ n+2 }^{ 2m + \ell } ( \tilde \kappa ) \, ds.
\end{align*}
We can show 
\begin{align}
	\partial_t \partial_s^\ell \tilde \kappa
	=
	( -1 )^m \partial_s^{ 2m + \ell + 2 } \tilde \kappa
	+
	( -1 )^m \sum_{ n=0 }^2 L^{ - ( 2 - n ) } P_{ n+1 }^{ 2m + \ell } ( \tilde \kappa )
        \label{3.3}
\end{align}
by induction on $\ell$. Indeed, since 
\[
	\partial_t \kappa
	=
	( -1 )^m \partial_s^{ 2m + 2 } \tilde \kappa
	+
	( -1 )^m \kappa^2 \partial_s^{ 2m } \tilde \kappa,
\]
we have
\begin{align*}
	\partial_t \partial_s \tilde{\kappa}
	= & \
        \partial_t \partial_s \kappa
        =
	\partial_s \partial_t \kappa
	+
	( \partial_t \vecf \cdot \veckappa ) \partial_s \kappa
	\\
	= & \
	\partial_s
	\left\{
	( -1 )^m \partial_s^{ 2m + 2 } \tilde \kappa
	+
	( -1 )^m \kappa^2 \partial_s^{ 2m } \tilde \kappa
	\right\}
	+
	( -1 )^m \kappa \left( \partial_s^{ 2m } \tilde \kappa \right) \left( \partial_s \tilde \kappa \right)
	\\
	= & \
	( -1 )^m \partial_s^{ 2m + 3 } \tilde \kappa
	+
	( -1 )^m 2 \kappa \left( \partial_s \kappa \right) \left( \partial_s^{ 2m } \tilde \kappa \right)
	+
	( -1 )^m \kappa^2 \partial_s^{ 2m + 1 } \tilde \kappa
	+
	( -1 )^m \kappa \left( \partial_s^{ 2m } \tilde \kappa \right) \left( \partial_s \tilde \kappa \right)
	\\
	= & \
	( -1 )^m \partial_s^{ 2m + 3 } \tilde \kappa
	+
	( -1 )^m \sum_{ n=0 }^2 L^{ 2 - n } P_{ n+1 }^{ 2m + 1 } ( \tilde \kappa ).
\end{align*}
Hence we obtain \pref{3.3} when $\ell=1$. If \pref{3.3} holds for $\ell \geq 1$, since
\begin{align*}
	\partial_t \partial_s^{ \ell + 1 } \tilde \kappa
	= & \
	\partial_s \partial_t \partial_s^\ell \tilde \kappa
	+
	( \partial_t \vecf \cdot \veckappa ) \partial_s^\ell \tilde \kappa
	\\
	= & \
	\partial_s \left\{
	( -1 )^m \partial_s^{ 2m + \ell + 2 } \tilde \kappa
	+
	( -1 )^m \sum_{ n=0 }^2 L^{ - ( 2 - n ) } P_{ n+1 }^{ 2m + \ell } ( \tilde \kappa )
	\right\}
	+
	( -1 )^m \kappa \left( \partial_s^{ 2m } \tilde \kappa \right)
	\partial_s \left( \partial_s^\ell \tilde \kappa \right)
	\\
	= & \
	( -1 )^m \partial_s^{ 2m + \ell + 3 } \tilde \kappa
	+
	( -1 )^m \sum_{ n=0 }^2 L^{ - ( 2 - n ) } P_{ n+1 }^{ 2m + \ell + 1 } ( \tilde \kappa ),
\end{align*}
we show \pref{3.3} for $\ell+1$. Hence we have
\begin{align*}
	&
	2 L^{ 2 \ell + 1 } \int_0^L
	\left( \partial_s^\ell \tilde \kappa \right)
	\left( \partial_t \partial_s^\ell \tilde \kappa \right)
	ds
	\\
	& \quad
	=
	2 L^{ 2 \ell + 1 } \int_0^L
	\left( \partial_s^{ \ell + m + 1 } \tilde \kappa \right)^2 ds
	+
	( -1 )^m 2 L^{ 2 \ell + 1 }
	\int_0^L
	\left( \partial_s^\ell \tilde \kappa \right)
	\sum_{ n=0 }^2
	L^{ - ( 2 - n ) } P_{ n+1 }^{ 2m + \ell } ( \tilde \kappa ) \, ds
	\\
	& \quad
	=
	- \frac 2 { L^{ 2 ( m + 1 ) } } I_{ m + \ell + 1 }
	+
	( -1 )^m 2 L^{ 2 \ell + 1 }
	\int_0^L
	\left( \partial_s^\ell \tilde \kappa \right)
	\sum_{ n=0 }^2
	L^{ - ( 2 - n ) } P_{ n+1 }^{ 2m + \ell } ( \tilde \kappa ) \, ds.
\end{align*}
Therefore we have
\[
	\frac d { dt } I_\ell
	+
	\frac { 2 \ell + 1 } { L^{ 2 ( m + 1 ) } } I_m I_\ell
	+
	\frac 2 { L^{ 2 ( m + 1 ) } } I_{ m + \ell + 1 }
	=
	( -1 )^m 2 L^{ 2 \ell + 1 }
	\int_0^L
	\left( \partial_s^\ell \tilde \kappa \right)
	\sum_{ n=0 }^2
	L^{ - ( 2 - n ) } P_{ n+1 }^{ 2m + \ell } ( \tilde \kappa ) \, ds.
\]
When $n=0$, after integration by parts $m$ times, using Theorem \ref{Theorem2} and Young's inequality, 
we have
\begin{align*}
	&
	( -1 )^m 2 L^{ 2 \ell + 1 }
	\int_0^L
	\left( \partial_s^\ell \tilde \kappa \right)
	L^{ - 2 } P_1^{ 2m + \ell } ( \tilde \kappa ) \, ds
	\\
	& \quad
	=
	c L^{ 2 \ell - 1 }
	\int_0^L
	\left( \partial_s^{ m + \ell } \tilde \kappa \right)^2 ds
	=
	\frac c { L^{ 2 ( m + 1 ) } } I_{ m + \ell }
	\\
	& \quad
	\leq
	\frac C { L^{ 2 ( m + 1 ) } }
	\left(
	I_{-1}^{ \frac 12 } I_{ m + \ell + 1 }
	+
	I_{-1}^{ \frac 1 { m + \ell + 2 } } I_{ m + \ell + 1 }^{ \frac { m + \ell + 1 } { m + \ell + 2 } }
	\right)
	\\
	& \quad
	\leq
	\frac C { L^{ 2 ( m + 1 ) } }
	\left\{
	\left( I_{-1}^{ \frac 12 } + \epsilon \right) I_{ m + \ell + 1 }
	+
	C_\epsilon I_{-1}
	\right\}.
\end{align*}
When $n=1$, we have
\[
	( -1 )^m 2 L^{ 2 \ell + 1 }
	\int_0^L
	\left( \partial_s^\ell \tilde \kappa \right)
	L^{ - 1 } P_2^{ 2m + \ell } ( \tilde \kappa ) \, ds
	=
	( -1 )^m 2 L^{ 2 \ell }
	\int_0^L
	\sum_{ k=0 }^{ 2m + \ell }
	c_k
	\left( \partial_s^\ell \tilde \kappa \right)
	\left( \partial_s^k \tilde \kappa \right)
	\left( \partial_s^{ 2m + \ell - k } \tilde \kappa \right)
	ds.
\]
We set
\begin{align*}
	K_1 = & \
	\left\{ k \in \{ 0 , \, \ldots , 2m + \ell \} \, | \, \max\{ k , 2m + \ell - k \} > m + \ell \right\}
	,
	\\
	K_2 = & \
	\left\{ k \in \{ 0 , \, \ldots , 2m + \ell \} \, | \, \max\{ k , 2m + \ell - k \} \leq m + \ell \right\}.
\end{align*}
If $\max\{ k , 2m + \ell - k \} > m + \ell$, then $\min\{ k , 2m + \ell - k \} < m + \ell$.
When  $ k \in K_1 $, from integration by parts $ \max\{ k , 2m + \ell - k \} - m - \ell $ times, we have
\begin{align*}
	&
	( -1 )^m 2 L^{ 2 \ell }
	\int_0^L
	\sum_{ k=0 }^{ 2m + \ell }
	c_k
	\left( \partial_s^\ell \tilde \kappa \right)
	\left( \partial_s^k \tilde \kappa \right)
	\left( \partial_s^{ 2m + \ell - k } \tilde \kappa \right)
	ds
	\\
	& \quad
	=
	2 L^{ 2 \ell } \int_0^L
	\sum_{ k \in K_1 }
	( -1 )^{ \max\{ k , 2m + \ell - k \} - \ell }
	c_k \left( \partial_s^{ m + \ell } \tilde \kappa \right)
	P_2^{ m + \ell } ( \tilde \kappa )
	\, ds
	\\
	& \quad \qquad
	+ \,
	( -1 )^m 2 L^{ 2 \ell }
	\int_0^L
	\sum_{ k \in K_2 }
	c_k
	\left( \partial_s^\ell \tilde \kappa \right)
	\left( \partial_s^k \tilde \kappa \right)
	\left( \partial_s^{ 2m + \ell - k } \tilde \kappa \right)
	ds
	\\
	& \quad
	\leq
	\frac C { L^{ 2 ( m + 1 ) } }
	\sum_{ k \in K_1 } \sum_{ k^\prime = 0 }^{ m + \ell }
	J_{ k^\prime , 3 } J_{ m + \ell - k^\prime , 3 } J_{ m + \ell , 3 }
	+
	\frac C { L^{ 2 ( m + 1 ) } }
	\sum_{ k \in K_2 }
	J_{ \ell , 3 } J_{ k , 3 } J_{ 2m + \ell - k , 3 }
	\\
	& \quad
	\leq
	\frac C { L^{ 2 ( m + 1 ) } }
	\sum_{ k \in K_1 } \sum_{ k^\prime = 0 }^{ m + \ell }
	J_{ m + \ell + 1 , 2 }^{ \frac { 6 k^\prime + 1 } { 6 ( m + \ell + 1 ) } }
	J_{ 0,2 }^{ \frac { 6 ( m + \ell - k^\prime ) + 5 } { 6 ( m + \ell + 1 ) } }
	J_{ m + \ell + 1 , 2 }^{ \frac { 6 ( m + \ell - k^\prime ) + 1 } { 6 ( m + \ell + 1 ) } }
	J_{ 0,2 }^{ \frac { 6 k^\prime + 5 } { 6 ( m + \ell + 1 ) } }
	J_{ m + \ell + 1 , 2 }^{ \frac { 6 ( m + \ell ) + 1 } { 6 ( m + \ell + 1 ) } }
	J_{ 0,2 }^{ \frac 5 { 6 ( m + \ell + 1 ) } }
	\\
	& \quad \qquad
	+ \,
	\frac C { L^{ 2 ( m + 1 ) } }
	\sum_{ k \in K_2 }
	J_{ m + \ell + 1 , 2 }^{ \frac { 6 \ell + 1 } { 6 ( m + \ell + 1 ) } }
	J_{ 0,2 }^{ \frac { 6 m + 5 } { 6 ( m + \ell + 1 ) } }
	J_{ m + \ell + 1 , 2 }^{ \frac { 6 k + 1 } { 6 ( m + \ell + 1 ) } }
	J_{ 0,2 }^{ \frac { 6 ( m + \ell - k ) + 5 } { 6 ( m + \ell + 1 ) } }
	J_{ m + \ell + 1 , 2 }^{ \frac { 6 ( 2m + \ell - k ) + 1 } { 6 ( m + \ell + 1 ) } }
	J_{ 0,2 }^{ \frac { - 6 ( m - k ) + 5 } { 6 ( m + \ell + 1 ) } }
	\\
	& \quad
	\leq
	\frac C { L^{ 2 ( m + 1 ) } }
	J_{ m + \ell + 1 , 2 }^{ \frac { 4 ( m + \ell ) + 1 } { 2 ( m + \ell + 1 ) } }
	J_{ 0,2 }^{ \frac { 2( m + \ell ) + 5 } { 2 ( m + \ell + 1 ) } }
	\\
	& \quad
	=
	\frac C { L^{ 2 ( m + 1 ) } }
	I_{ m + \ell + 1 }^{ \frac { 4 ( m + \ell ) + 1 } { 4 ( m + \ell + 1 ) } }
	I_0^{ \frac { 2( m + \ell ) + 5 } { 4 ( m + \ell + 1 ) } }
	\\
	& \quad
	\leq
	\frac C { L^{ 2 ( m + 1 ) } }
	\left(
	\epsilon I_{ m + \ell + 1 }
	+
	C_\epsilon I_0^{ \frac { 2 ( m + \ell ) + 5 } 3 }
	\right).
\end{align*}
When $ n = 2 $, we have
\[
	P_3^{ 2m + \ell } ( \tilde \kappa )
	=
	\sum_{ \alpha = 0 }^{ 2m + \ell }
	\sum_{ k + j = \alpha }
	c_{k,j}
	\left( \partial_s^k \tilde \kappa \right)
	\left( \partial_s^j \tilde \kappa \right)
	\left( \partial_s^{ 2m + \ell - k - j }\tilde \kappa \right).
\]
We set 
\begin{align*}
	K_{ \alpha , 1 }
	= & \
	\left\{
	( k,j ) \, | \,
	k + j = \alpha , \,
	\max \{ k, j, 2m + \ell - k - j \} > m + \ell
	\right\}
	,
	\\
	K_{ \alpha , 2 }
	= & \
	\left\{
	( k,j ) \, | \,
	k + j = \alpha , \,
	\max \{ k, j, 2m + \ell - k - j \} \leq m + \ell
	\right\}.
\end{align*}
If $\max \{ k, j, 2m + \ell - k - j \} > m + \ell$, the other terms are less than $m + \ell$.
When  $ k \in K_{ \alpha , 1 } $, from integration by parts $ \max \{ k, j, 2m + \ell - k - j \} - m - \ell $ times, we have
\begin{align*}
	&
	( -1 )^m 2 L^{ 2 \ell +1 }
	\int_0^L
	\sum_{ \alpha = 0 }^{ 2m + \ell }
	\sum_{ k + j = \alpha }
	c_{kj}
	\left( \partial_s^k \tilde \kappa \right)
	\left( \partial_s^j \tilde \kappa \right)
	\left( \partial_s^{ 2m + \ell - k - j }\tilde \kappa \right)
	ds
	\\
	& \quad
	=
	2 L^{ 2 \ell +1 }
	\int_0^L
	\sum_{ \alpha = 0 }^{ 2m + \ell }
	\sum_{ ( k,j ) \in K_{ \alpha , 1 } }
	( -1 )^{ \max \{ k, j, 2m + \ell - k - j \} - \ell }
	c_{kj} \left( \partial_s^{ m + \ell } \tilde \kappa \right)
	P_3^{ m+ \ell } ( \tilde \kappa )
	\, ds
	\\
	& \quad \qquad
	+ \,
	( -1 )^m 2 L^{ 2 \ell +1 }
	\int_0^L
	\sum_{ \alpha = 0 }^{ 2m + \ell }
	\sum_{ ( k,j ) \in K_{ \alpha , 2 } }
	\left( \partial_s^k \tilde \kappa \right)
	\left( \partial_s^j \tilde \kappa \right)
	\left( \partial_s^{ 2m + \ell - k - j }\tilde \kappa \right)
	ds
	\\
	& \quad
	\leq
	\frac C { L^{ 2 ( m + 1 ) } }
	\sum_{ \alpha = 0 }^{ 2m + \ell }
	\sum_{ ( k,j ) \in K_{ \alpha , 1 } }
	\sum_{ \beta = 0 }^{ m + \ell }
	\sum_{ k^\prime + j^\prime = \beta }
	J_{ k^\prime , 4 } J_{ j^\prime , 4 }
	J_{ m + \ell - k^\prime j^\prime , 4 }
	J_{ m + \ell , 4 }
	\\
	& \quad \qquad
	+ \,
	\sum_{ \alpha = 0 }^{ 2m + \ell }
	\sum_{ ( k,j ) \in K_{ \alpha , 2 } }
	J_{ \ell , 4 } J_{ k,4 } J_{ j,4 }
	J_{ 2m + \ell - k - j , 4 }
	\\
	& \quad
	\leq
	\frac C { L^{ 2 ( m + 1 ) } }
	\sum_{ \alpha = 0 }^{ 2m + \ell }
	\sum_{ ( k,j ) \in K_{ \alpha , 1 } }
	\sum_{ \beta = 0 }^{ m + \ell }
	\sum_{ k^\prime + j^\prime = \beta }
	J_{ m + \ell + 1 , 2 }^{ \frac { 4 k^\prime + 1 } { 4 ( m + \ell + 1 ) } }
	J_{ 0,2 }^{ \frac { 4 ( m + \ell - k^\prime ) + 3 } { 4 ( m + \ell + 1 ) } }
	J_{ m + \ell + 1 , 2 }^{ \frac { 4 j^\prime + 1 } { 4 ( m + \ell + 1 ) } }
	J_{ 0,2 }^{ \frac { 4 ( m + \ell - j^\prime ) + 3 } { 4 ( m + \ell + 1 ) } }
	\\
	& \quad \qquad \qquad \qquad \qquad \qquad \qquad \qquad \qquad
	\times
	J_{ m + \ell + 1 , 2 }^{ \frac { 4 ( m + \ell - k^\prime - j^\prime ) + 1 } { 4 ( m + \ell + 1 ) } }
	J_{ 0,2 }^{ \frac { 4 ( k^\prime + j^\prime ) + 3 } { 4 ( m + \ell + 1 ) } }
	J_{ m + \ell + 1 , 2 }^{ \frac { 4 ( m + \ell ) + 1 } { 4 ( m + \ell + 1 ) } }
	J_{ 0,2 }^{ \frac 3 { 4 ( m + \ell + 1 ) } }
	\\
	& \quad \qquad
	+ \,
	\frac C { L^{ 2 ( m + 1 ) } }
	\sum_{ \alpha = 0 }^{ 2m + \ell }
	\sum_{ ( k,j ) \in K_{ \alpha , 2 } }
	J_{ m + \ell + 1 , 2 }^{ \frac { 4 \ell + 1 } { 4 ( m + \ell + 1 ) } }
	J_{0,2}^{ \frac { 4m + 3 } { 4 ( m + \ell + 1 ) } }
	J_{ m + \ell + 1 , 2 }^{ \frac { 4 k + 1 } { 4 ( m + \ell + 1 ) } }
	J_{0,2}^{ \frac { 4 ( m + \ell - k ) + 3 } { 4 ( m + \ell + 1 ) } }
	\\
	& \quad \qquad \qquad \qquad \qquad \qquad \qquad \qquad \qquad
	\times
	J_{ m + \ell + 1 , 2 }^{ \frac { 4 j + 1 } { 4 ( m + \ell + 1 ) } }
	J_{0,2}^{ \frac { 4 ( m + \ell - j ) + 3 } { 4 ( m + \ell + 1 ) } }
	J_{ m + \ell + 1 , 2 }^{ \frac { 4 ( 2m + \ell - k - j ) + 1 } { 4 ( m + \ell + 1 ) } }
	J_{0,2}^{ \frac { - 4 ( m - k - j ) + 3 } { 4 ( m + \ell + 1 ) } }
	\\
	& \quad
	=
	\frac C { L^{ 2 ( m + 1 ) } }
	J_{ m + \ell + 1 , 2 }^{ \frac { 2 ( m + \ell ) + 1 } { m + \ell + 1 } }
	J_{0,2}^{ \frac { 2 ( m + \ell ) + 3 } { m + \ell + 1 } }
	\\
	& \quad
	=
	\frac C { L^{ 2 ( m + 1 ) } }
	I_{ m + \ell + 1 }^{ \frac { 2 ( m + \ell ) + 1 } { 2 ( m + \ell + 1 ) } }
	I_0^{ \frac { 2 ( m + \ell ) + 3 } { 2 ( m + \ell + 1 ) } }
	\\
	& \quad
	\leq
	\frac C { L^{ 2 ( m + 1 ) } }
	\left( 
	\epsilon I_{ m + \ell + 1 } + C_\epsilon I_0^{ 2 ( m + \ell ) + 3 }
	\right).
\end{align*}
Taking
$ \epsilon > 0 $ sufficiently small, we obtain
\[
	\frac d { dt } I_\ell
	+
	\frac { 2 \ell + 1 } { L^{ 2 ( m + 1 ) } } I_m I_\ell
	+
	\frac { C_1 } { L^{ 2 ( m + 1 ) } } I_{ m + \ell + 1 }
	\leq
	\frac { C_2 } { L^{ 2 ( m + 1 ) } }
	\left(
	I_{-1}
	+
	I_0^{ \frac { 2 ( m + \ell ) + 5 } 3 }
	+
	I_0^{ 2 ( m + \ell ) + 3 }
	\right)
\]
for sufficiently large $ t > 0 $. Hence we obtain
\[
	I_\ell \leq C_\ell e^{ - \lambda_\ell t }.
\]
\qed
\end{proof}
Moreover we obtain the next theorem.
\begin{thm}
Let $ \vecf $ be as in Theorem \ref{Theorem3},
and let $ \displaystyle{ f(s,t) = \sum_{ k \in \mathbb{Z} } \hat f(k) (t) \varphi_k (s) } $ be the Fourier expansion for any fixed $ t > 0 $.
Set
\[
	\vecc (t) = \frac 1 { \sqrt { L(t) } }( \Re \hat f(0) (t) , \Im \hat f(0) (t) ) ,
\]
and define $ r(t) \geq 0 $ and $ \sigma (t) \in  \mathbb{R} / 2 \pi \mathbb{Z}  $ by
\[	
	\hat f(1) (t) = \sqrt{ L(t) } r(t) \exp \left( i \frac { 2 \pi \sigma (t) } { L(t) } \right) .
\]
Furthermore we set
\[
	\tilde { \vecf } ( \theta , t )
	=
	\vecf ( L(t) \theta - \sigma (t) , t ) ,
	\quad \mbox{for} \quad
	( \theta , t ) \in \mathbb{R} / \mathbb{Z} \times [ 0 , \infty )
	.
\]
Then the following claims hold.
\begin{itemize}
\item[{\rm (1)}]
There exists $ \vecc_\infty \in \mathbb{R}^2 $ such that
\[
	\| \vecc (t) - \vecc_\infty \| \leq C e^{ - \gamma t } .
\]
\item[{\rm (2)}]
The function $ r(t) $ converges exponentially to the constant $ \displaystyle{ \frac { L_\infty } { 2 \pi } } $ as $ t \to \infty $:
\[
	\left| r(t) - \frac { L_\infty } { 2 \pi } \right|
	\leq
	C e^{ - \gamma t } .
\]
\item[{\rm (3)}]
There exists $ \sigma_\infty \in \mathbb{R} / 2 \pi \mathbb{Z} $ such that
\[
	| \sigma (t) - \sigma_\infty | \leq C e^{ - \gamma t } .
\]
\item[{\rm (4)}]
For any $ k \in \mathbb{N} \cup \{ 0 \} $ there exist $ C_k > 0 $ and $ \gamma_k > 0 $ such that 
\[
	\| \tilde { \vecf } ( \cdot , t ) - \tilde { \vecf }_\infty \|_{ C^k ( \mathbb{R} / \mathbb{Z} ) }
	\leq
	C_k e^{ - \gamma_k t }
	,
\]
where
\[
	\tilde { \vecf }_\infty ( \theta )
	=
	\vecc_\infty + \frac { L_\infty } { 2 \pi } ( \cos 2 \pi \theta , \sin 2 \pi \theta ) .
\]
\item[{\rm (5)}]
For sufficiently large $ t $,
$ \mathrm{Im} \tilde { \vecf }( \cdot , t ) $ is the boundary of a bounded domain $ \Omega (t) $.
Furthermore, there exists $ T_\ast \geq 0 $ such that $ \Omega (t) $ is strictly convex for $ t \geq T_\ast $.
\item[{\rm (6)}]
Let $ D_{ r_\infty } ( \vecc_\infty ) $ be the closed disk with center $ \vecc_\infty $ and radius $ r_\infty $.
Then we have 
\[
	d_H ( \overline { \Omega(t) } , D_{ r_\infty } ( \vecc_\infty ) )
	\leq
	C e^{ - \gamma t } ,
\]
where $ d_H $ is the Hausdorff distance.
\item[{\rm (7)}]
Let $\displaystyle 
	\vecb (t) =
	\frac 1 { A(t) } \iint_{ \Omega (t) } \vecx \, d \vecx$
be the barycenter of $\Omega(t)$. Then we have
\[
\|A(t) ( \vecb (t) - \vecc (t) )\| \leq C e^{ - \gamma t }.
\]
\end{itemize}
\label{Theorem4}
\end{thm}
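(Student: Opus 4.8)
The plan is to convert the exponential decay of all the scale-invariant quantities $I_\ell$ furnished by Theorem~\ref{Theorem3} into control of the Fourier coefficients of the position map $\vecf$, and then to read off the seven geometric conclusions. As a preliminary: since $dL/dt=-L^{-(2m+1)}I_m\le 0$ while $L^2\ge 4\pi A$ with $A\equiv A(0)>0$ (the area being conserved), the length $L(t)$ decreases to a limit $L_\infty\ge\sqrt{4\pi A}>0$, and integrating the identity for $dL/dt$ together with $I_m\le C e^{-\lambda_m t}$ gives $|L(t)-L_\infty|\le C e^{-\gamma t}$. Moreover $\int_0^L|\partial_s^\ell\tilde\kappa|^2\,ds=I_\ell/L^{2\ell+1}$ decays exponentially for every $\ell\ge0$, and since $\tilde\kappa$ and all its $s$-derivatives have zero mean, Wirtinger's and Sobolev's inequalities on a circle of bounded length imply that every Sobolev norm of $\tilde\kappa$ — hence of the normal speed $V:=(-1)^m\partial_s^{2m}\tilde\kappa$ (so that $\partial_t\vecf=V\vecnu$) — decays exponentially.

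Next I would carry out the Fourier analysis. Writing $f=f_1+if_2$ and using $\varphi_k(s)=L^{-1/2}e^{2\pi iks/L}$, the Frenet relation $f''=i\kappa f'=i(2\pi L^{-1}+\tilde\kappa)f'$ and $\widehat{f'}(k)=(2\pi ik/L)\hat f(k)$ combine to give the algebraic identity $4\pi^2k(k-1)\hat f(k)=-iL^2\,\widehat{\tilde\kappa f'}(k)$ for $k\notin\{0,1\}$, while $\widehat{f'}(0)=0$; since $|f'|\equiv1$ this already yields $|\hat f(-1)|\le C e^{-\gamma t}$. For higher modes, $\partial_s^N(\tilde\kappa f')$ is a universal polynomial in $\tilde\kappa,\partial_s\tilde\kappa,\dots,\partial_s^N\tilde\kappa$ and $\kappa$ times $f'$, each monomial of which contains at least one $s$-derivative of $\tilde\kappa$; Gagliardo--Nirenberg interpolation and the preliminary estimates then give $\|\partial_s^N(\tilde\kappa f')\|_{L^2}\le C_N e^{-\gamma_N t}$, whence $\sum_{k\notin\{0,1\}}(1+|k|)^{2N}|\hat f(k)|^2\le C_N e^{-\gamma_N t}$ for every $N$. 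Parseval for $f'$ (for which $\sum_k|\widehat{f'}(k)|^2=L$) isolates the mode $k=1$: $4\pi^2|\hat f(1)|^2=L^3+O(e^{-\gamma t})$, and since $|\hat f(1)|^2=L\,r^2$ this gives $r(t)=L(t)/(2\pi)+O(e^{-\gamma t})\to L_\infty/(2\pi)$, proving (2) and showing $|\hat f(1)|$ is bounded below. For (4), substituting the Fourier expansion into $\tilde{\vecf}(\theta,t)=\vecf(L\theta-\sigma,t)$ and using the definitions of $\vecc$, $r$, $\sigma$ gives $\tilde{\vecf}(\theta,t)=\vecc(t)+r(t)(\cos2\pi\theta,\sin2\pi\theta)+R(\theta,t)$, where the remainder $R$ is made up precisely of the modes $k\notin\{0,1\}$; by the bound just obtained and Sobolev embedding $\|R(\cdot,t)\|_{C^N(\mathbb{R}/\mathbb{Z})}\le C_N e^{-\gamma_N t}$, so once (1) is known we get $\|\tilde{\vecf}(\cdot,t)-\tilde{\vecf}_\infty\|_{C^N}\le C_N e^{-\gamma_N t}$.

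It remains to prove (1), (3) and then (5)--(7). For (1) I would differentiate $\vecc(t)=L^{-1}\int_0^L\vecf\,ds$ using the transport identity $\frac{d}{dt}\int_0^L\phi\,ds=\int_0^L(\partial_t\phi-\kappa V\phi)\,ds$ for the purely normal parametrization together with $dL/dt=-\int_0^L\kappa V\,ds$; the terms containing $\vecf$ recombine into $L^{-1}\int_0^L\kappa V(\vecc-\vecf)\,ds$, and since $|\vecc-\vecf(s)|\le\mathrm{diam}\le L/2\le L(0)/2$ and $\|\kappa\|_{L^\infty}$ is bounded for large $t$, one obtains $|d\vecc/dt|\le C e^{-\gamma t}$, hence $\vecc(t)\to\vecc_\infty$ at the stated rate. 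For (3), the coefficient $\hat f(1)$ is invariant under translations of $\vecf$, so after replacing $\vecf$ by the bounded quantity $\vecf-\vecc$ one may differentiate $\hat f(1)$ in time in material coordinates — in which the arc-length coordinate of a material point evolves at the exponentially small rate $-\int_0^s\kappa V\,ds'$ — to obtain $|d\hat f(1)/dt|\le C e^{-\gamma t}$; thus $\hat f(1)(t)$ converges exponentially to a nonzero limit, so $\arg\hat f(1)(t)$, and hence $\sigma(t)=\tfrac{L(t)}{2\pi}\arg\hat f(1)(t)$, converge exponentially. Finally, (5)--(7) are consequences of the $C^2$-convergence in (4): the curvature of $\mathrm{Im}\,\tilde{\vecf}(\cdot,t)$ tends uniformly to $2\pi/L_\infty>0$, so for large $t$ this curve is embedded and, past some $T_\ast$, strictly convex, bounding $\Omega(t)$; the $C^0$ estimate gives $d_H(\overline{\Omega(t)},D_{r_\infty}(\vecc_\infty))\le C e^{-\gamma t}$ with $r_\infty=L_\infty/(2\pi)$, since for convex bodies the Hausdorff distance of the regions is controlled by that of their boundaries; and $\vecb(t)$, being a boundary-integral functional of $\tilde{\vecf}(\cdot,t)$ continuous in the $C^1$ topology, converges exponentially to the barycenter of $D_{r_\infty}(\vecc_\infty)$, namely $\vecc_\infty$, which with $A\equiv A(0)$ yields $\|A(\vecb(t)-\vecc(t))\|\le C e^{-\gamma t}$.

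The step I expect to be the main obstacle is the pair of time-derivative estimates for $\vecc$ and for $\hat f(1)$: there is no a priori bound on the position $\vecf$, so the decay cannot be read off term by term and one must genuinely use the structural cancellations — translation invariance of $\hat f(k)$ for $k\ne0$, and the exact formula $dL/dt=-\int_0^L\kappa V\,ds$ — to reduce every contribution to the exponentially small normal speed $V$ and its antiderivative $\int_0^s\kappa V\,ds'$. The remaining technical input, the bound $\|\partial_s^N(\tilde\kappa f')\|_{L^2}\le C_N e^{-\gamma_N t}$, is routine Gagliardo--Nirenberg bookkeeping once one observes that every monomial of $\partial_s^N(\tilde\kappa f')$ carries at least one derivative of $\tilde\kappa$, the curvature factors being bounded.
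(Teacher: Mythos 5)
Your proposal is correct in substance, but it is worth noting that the paper itself gives essentially no argument here: its entire proof consists of observing that Theorem \ref{Theorem3} yields $\|\tilde\kappa\|_{C^\infty}\le Ce^{-\lambda t}$ and then invoking \cite[Theorem 4.3]{NN}, where the seven assertions are proved for the $m=0$ flow by exactly the kind of Fourier-mode analysis you reconstruct (control of the modes $k\notin\{0,1\}$ through the identity $4\pi^2k(k-1)\hat f(k)=-iL^2\widehat{\tilde\kappa f'}(k)$, Parseval for $f'$ to handle $r$, and ODE-in-time arguments for $\hat f(0)$ and $\hat f(1)$). So you are not taking a different route; you are supplying the details the paper outsources, starting from the same input $\|\tilde\kappa\|_{C^\infty}\le Ce^{-\lambda t}$, and your treatment of the two delicate points — the unboundedness of the position, handled by centering at $\vecc(t)$ and using $|\vecf-\vecc|\le L/2$ together with the translation invariance of $\hat f(1)$, and the lower bound $|\hat f(1)|\gtrsim 1$ needed to pass from convergence of $\hat f(1)$ to convergence of $\sigma$ — is the correct one, so I see no genuine gap, only routine bookkeeping (e.g.\ the weighted estimate $\sum_{k\neq 0,1}(1+|k|)^{2N}|\hat f(k)|^2\le C_Ne^{-\gamma_N t}$ and the $C^1$-closeness argument for embeddedness in (5)) left at sketch level.
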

\begin{proof}
From Theorem \ref{Theorem3}, we have
\[
\|\tilde{\kappa}\|_{C^{\infty}} \leq Ce^{-\lambda t}.
\]
Hence we can show each of the above assertions in the same way as in \cite[Theorem 4.3]{NN}.
\qed
\end{proof}
\par\noindent
{\bf Acknowledgment}.
The author expresses their appreciation to Professor Takeyuki Nagasawa for discussions.
The author also would like to express their gratitude to Professor Neal Bez for English language editing.

\end{document}